\newtheorem*{rep@theorem}{\rep@title}
\newcommand{\newreptheorem}[2]{%
\newenvironment{rep#1}[1]{%
 \def\rep@title{#2 \ref*{##1}}%
 \begin{rep@theorem}}%
 {\end{rep@theorem}}}
\title{The moduli space of $G$-algebras}
\author{Andrew O'Desky}
\address{
\parbox{0.7\linewidth}{
    Department of Mathematics\\
    Princeton University\\
    Princeton, NJ 08544-1000, USA\\[.1em]
}}
\email{andy.odesky@gmail.com}
\author{Julian Rosen}
\address{
\parbox{0.7\linewidth}{
    Department of Mathematics and Statistics\\
    University of Maine\\
    Orono, ME 04469\\
    }
}
\email{julianrosen@gmail.com}
\date{\today} 
\begin{document}

\begin{abstract} 
    Let $L$ be a Galois algebra with Galois group $\gp$ 
    and let $x$ be a normal element of $L$. 
    The moduli space $\X$ of pairs $(L,x)$ 
    is isomorphic to an open subset of 
    the quotient variety $\PPP/\gp$, 
    where $\PPP$ is the projective space 
    of the regular representation of $\gp$. 
    We provide a formula for the height of any pair 
    $(L,x) \in \Xg(\QQ)$ 
    in terms of algebraic invariants of $L$ and $x$ 
    with respect to a natural adelic metric on 
    the anticanonical divisor of $\PPP/\gp$.  
\end{abstract} 

\subjclass[2020]{11G50, 14D22, 14M17, 11G35}
\keywords{heights, moduli spaces, normal elements.}

\maketitle

\section{Introduction} 

Let $G$ be a finite group acting linearly on 
a finite-dimensional rational vector space $V$, 
and let $\PP = \PP(V)$ denote the projective space of $V$. 
In this paper we study 
heights of rational points on $\PP/G$. 

Our main result is that for $V$ the regular representation of $G$ 
and a certain open subset $\X$ of $\PP/G$, 
the height of any rational point $P$ on $\X$ 
can be expressed in terms of certain objects attached to $P$. 
We show that $P$ determines, and is determined by, 
a Galois $\QQ$-algebra $L$ with Galois group $G$ 
and a trace-one normal element $x$ of $L$ 
up to Galois conjugacy. 
The Galois algebra $L$ is isomorphic to 
    $\nfe \times \cdots \times \nfe$ 
    for a number field $\nfe$. 
The images of the Galois conjugates of $x$ in $\nfe$ 
generate a fractional $K$-ideal $J$. 
Let $\norm{\cdot}_\infty$ denote the canonical norm 
on Minkowski space $L_\RR$. 

\begin{theorem}\label{thm:thm1}
Let $\phi \colon \PPP/G \to \PP^N$ be a non-constant morphism. 
Let $d_\phi$ be the degree of the composite map 
    $\PPP \to \PPP/\gp \xrightarrow{\phi} \PP^N$. 
Then for all $\pt = (L,x) \in \X(\QQ)$, 
\begin{equation}\label{eqn:newmainformula}
    h(\phi(\pt)) =
    d_\phi\log{\norm{x}_\infty} - 
    \frac{d_\phi}{[\nfe:\QQ]}\log{N(J)} + O(1) 
\end{equation}
    for a bounded function $O(1)$. 
\end{theorem}

To prove this theorem we use the method of descent 
to construct an ample line bundle $\DLB$ on $\PPP/\gp$ 
which is linearly equivalent to 
the anticanonical divisor of $\PPP/\gp$ 
up to torsion in the divisor class group. 
We show that $\DLB$ is globally generated 
    and restricts to an immersion on $\X$. 
These geometric results are used to prove 
    Theorem~\ref{thm:thm1} in \S\ref{sec:heightformulas}. 

In the last section we give a sharper result. 
We prove that the function 
\begin{equation}\label{eqn:acheightintro}
    h_{AC}(L,x) = 
    |G|\log{\norm{x}_\infty} - 
    \frac{|G|}{[\nfe:\QQ]}\log{N(J)}
\end{equation}
(with no $O(1)$) 
is actually the height function $h_{AC}$ 
associated with a natural adelic metric on $\DLB$. 

\subsection{Self-dual elements} 

In addition to normal elements, 
    we also consider \emph{self-dual} elements. 
Recall an element $x \in L$ is self-dual if for all $g \in G$ 
    we have 
\begin{equation}
    \mathrm{tr}^L_\QQ(x g(x)) = 
    \begin{cases}
        1 & \text{if $g = 1$,}\\
        0 & \text{otherwise.}
    \end{cases} 
\end{equation}
Here we assume that $G$ has odd order 
since this guarantees the existence of self-dual elements 
by a theorem of Bayer-Fluckiger--Lenstra~\cite{bayer_lenstra}. 

The formula \eqref{eqn:newmainformula} 
indicates that rational points of $\X$ 
corresponding to self-dual elements 
are of particular interest. 
Suppose $L = \nfe$ for simplicity, and 
let $x \in \nfe$ be any self-dual element. 
The first indication is that 
$$\norm{x}_\infty = \sqrt{\tr^\nfe_\QQ(x^2)} = 1.$$ 
Indeed the left equality holds since $\nfe$ is totally real, 
and the right since $x$ is self-dual. 
The second indication involves the norm term $N(J)$. 
Recall the basic formula relating norms and discriminants: 
\begin{equation}
    N(J) = [O_\nfe:J] = \sqrt{\disc{J}\disc{\nfe}^{-1}} . 
\end{equation}
In particular, if $J$ is {unimodular}, 
then $\disc{J} = 1$ and we have the appealing formula 
$$h_{AC}(\nfe,x) = \log \sqrt{\disc{\nfe}}.$$ 
Naturally it would be of interest to have a direct relationship 
    between heights and discriminants 
    (cf.~e.g.~\cite{ESZB}, \cite{yasuda-2015}) 
    so we should like to understand 
    the extent to which $J$ fails to be unimodular. 
As $x$ is self-dual the smaller lattice 
\begin{equation}
    \lat{}\coloneqq
    \sum_{g \in G}
    \ZZ g(x) 
\end{equation}
is unimodular, however in general 
\begin{equation}
    \sum_{g \in G}
    \ZZ g(x) 
    \neq 
    \sum_{g \in G}
    O_\nfe g(x) = J. 
\end{equation}

Thus we ask whether 
there is a relationship between $N(I)$ and $N(J)$. 
In this direction we prove the following result. 
Let $K/\QQ$ be a Galois field extension with Galois group $G$ 
and let $x$ be a normal element of $K$, not necessarily self-dual. 
For each integer $D \geq 1$ 
consider the following order of $\nfe$: 
\begin{equation} 
    T_D = \{a \in \nfe : a I^D \subset I^D\}. 
\end{equation} 
These orders increase with $D$ 
and stabilize, and we call 
\begin{equation}\label{eqn:stablemultiplierorder}
T_\infty \coloneqq \lim_{D \to \infty} T_D 
\end{equation}
the \emph{stable multiplier order} of $I$. 
We prove two stability results 
and also find another interpretation for $T_\infty$. 

\begin{theorem}\label{thm:stablemultiplierorder} 
    \,\,\,
    \begin{enumerate}
        \item $T_\infty = T_{D}$ if $D \geq |G|-1$. 
        \item $N(\lat{}^D) = N(J^D)$ if $D \geq |G|-1$. 
        \item $\Spec T_\infty$ is isomorphic to 
            the fiber of $\PP \to \PP/G$ 
            over the integral point of $\PP/G$ 
            determined by $(\nfe,x)$. 
    \end{enumerate}
\end{theorem} 


%




\section{Construction of orbit parametrizations}  
\label{sec:two} 

In this section we construct orbit parametrizations for  
$\gp$-algebras equipped with normal and self-dual bases. 
If $R$ is a commutative ring and 
$\Spec S$ is a $\gp$-torsor over $\Spec R$ 
in the \'etale topology, 
then we call $S$ a \defn{$\gp$-algebra over $R$}. 

\begin{remark}
The normal basis theorem guarantees 
the existence of normal elements if $S$ is a field,  
however normal elements need not exist for an arbitrary $\gp$-algebra 
(e.g.~$S$ might not be free as an $R$-module 
    or there may be local obstructions due to wild ramification). 
Self-dual elements exist for Galois field extensions of odd degree 
in any characteristic \cite{bayer_lenstra}, \cite{bayer-2001} 
but even degree field extensions may not have self-dual elements 
    (e.g. quadratic field extensions with characteristic $\neq 2$). 
\end{remark}

For fixed $R$, we consider pairs $(S,x)$, where $S/R$ is a $\gp$-algebra and $x\in S$ is a normal element. An isomorphism between pairs $(S,x)$ and $(S',x')$ is a $\gp$-equivariant $R$-algebra isomorphism $\varphi\colon S\is S'$ satisfying $\varphi(x)=x'$. Given a $\gp$-algebra $S$ and a ring homomorphism $f\colon R\to R'$, the base extension $S\otimes R'$ is a $\gp$-algebra over $R'$. If $x\in S$ is normal (resp.\ self-dual), then $x\otimes 1\in S\otimes R'$ is normal (resp.\ self-dual).

\begin{definition}
$\Mg$ is the functor taking a commutative ring $R$ to the set of isomorphism classes of pairs $(S,x)$, where $S/R$ is a $\gp$-algebra and $x\in S$ is a normal element. We define $\MMg$ similarly but with $x$ self-dual. 
\end{definition}

These functors are representable by affine schemes over $\Z$, which we construct as subquotients of the group of units in the group algebra of $G$. 
The functor of commutative rings
\[
    R\mapsto\left\{u =\sum_{g \in \gp} a_g [g]\in R[G]^\times:
    \sum_{g\in\gp} a_g = 1\right\}
\]
is representable by an affine group scheme of finite type over $\Z$, 
which we denote by $\ug$. 
There is an anti-involution $u\mapsto \bar{u}$ of $\ug $, given by
$\overline{\sum_{g \in \gp} a_g[g]} 
= \sum_{g \in \gp} a_g[g^{-1}]$, 
and we also consider 
the subgroup scheme $\NU{\gp}\subset \ug$ of norm-one units given by
\[
\NU{\gp}(R)=\left\{u\in \ug(R) :u\bar{u}=1\right\}.
\]
The coordinate ring $\A$ of $\ug$ is the quotient of $\Z[X_g:g\in \gp][\GDET{\gp}^{-1}]$ 
by the principal ideal generated by 
$(\sum_{g\in\gp} X_g - 1)$ 
where $\GDET{\gp}$ is the determinant of the matrix 
with rows and columns indexed by $G$ whose $g,h$ component is $X_{gh}$. 
The coordinate ring $\asd$ of $\NU{\gp}$ is 
the quotient of $\A$ by the ideal 
$({\sum_{h\in \gp } X_{gh}X_{h} - \delta_{g,1}:g\in \gp})$. 


\begin{lemma}
\label{lembij}
Suppose $S/R$ is a $\gp$-algebra. Then there is a bijection from the set of $\gp $-equivariant ring homomorphisms $\varphi{\colon}\A \to S$ to the set of normal elements of $S/R$, taking $\varphi$ to $\varphi(X_1)$. 
Moreover, $\varphi$ factors through $\asd$ 
    if and only if $\varphi(X_1)$ is self-dual. 
\end{lemma}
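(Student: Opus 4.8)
The plan is to write down the correspondence explicitly in both directions and check they are mutually inverse. A $\gp$-equivariant ring homomorphism $\varphi\colon\A\to S$ is determined by the images $\varphi(X_g)\in S$, subject to the relations defining $\A$: namely $\sum_g\varphi(X_g)=1$ and the invertibility of the group determinant evaluated at the $\varphi(X_g)$. Equivariance forces $\varphi(X_h)=\varphi(g(X_{hg}))=g(\varphi(X_{hg}))$, so setting $x:=\varphi(X_1)$ we get $\varphi(X_g)=g^{-1}(x)$ for all $g$ (using the action $g(X_h)=X_{hg^{-1}}$, so that $X_g = g^{-1}(X_1)$). Thus $\varphi$ is completely determined by $x=\varphi(X_1)$, and the assignment $\varphi\mapsto\varphi(X_1)$ is injective on equivariant homomorphisms. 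Conversely, given any $x\in S$, the formula $X_g\mapsto g^{-1}(x)$ defines a $\gp$-equivariant $R$-algebra homomorphism from $R[X_g:g\in\gp]$ to $S$, and it descends to $\A$ precisely when the two defining relations are satisfied.

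The key step is then to identify, for $x\in S$, when these relations hold, and to match them with normality. The relation $\sum_g g^{-1}(x)=1$ is exactly the trace-one condition $\tr_{S/R}(x)=1$, since $\{g^{-1}:g\in\gp\}=\gp$ and the trace of $x$ in the $\gp$-algebra $S$ is $\sum_{g\in\gp}g(x)$. For the second relation, one uses the standard fact (e.g.\ via base change to a faithfully flat cover splitting the torsor, where $S$ becomes the split algebra $\splitalg{}$ and the $\gp$-conjugates of $x$ correspond to the ``Frobenius twist'' of a single function) that the $\gp$-conjugates $\{g(x):g\in\gp\}$ form an $R$-module basis of $S$ if and only if the group determinant $\GDET{\gp}$ evaluated at $(g^{-1}(x))_{g}$ is a unit in $S$ — indeed, that determinant is, up to the natural identifications, the determinant of the change-of-basis matrix from a local group basis to $\{g(x)\}$. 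Hence $\varphi$ descends to $\A$, i.e.\ $\GDET{\gp}$ is inverted, exactly when $x$ is normal. This establishes the bijection.

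For the self-dual addendum: $\asd$ is the quotient of $\A$ by the ideal generated by $\sum_{h}X_{gh}X_h-\delta_{g,1}$, so $\varphi$ factors through $\asd$ iff $\sum_h \varphi(X_{gh})\varphi(X_h)=\delta_{g,1}$ for all $g\in\gp$. Substituting $\varphi(X_h)=h^{-1}(x)$ and reindexing ($h'=h^{-1}$, $g'=h^{-1}g^{-1}\cdots$, chasing the indices through the $\gp$-action) turns the left-hand side into $\sum_{h}h^{-1}(x)\,h^{-1}(g^{-1}\cdot\text{something})(x)$, which after the substitution $k=h^{-1}$ becomes $\tr_{S/R}\!\big(x\cdot g(x)\big)$ for the appropriate $g$. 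Thus the family of relations holds iff $\tr_{S/R}(xg(x))=\delta_{g,1}$ for all $g$, i.e.\ iff $x$ is self-dual.

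I expect the main obstacle to be the second step: cleanly proving that invertibility of the group determinant at $(g^{-1}(x))_g$ is equivalent to $\{g(x)\}$ being an $R$-basis of $S$, without assuming $S$ is local or a field. The clean route is faithfully flat descent — pass to an fppf cover $R\to R'$ over which $\Spec S$ becomes the trivial $\gp$-torsor, identify $S\otimes_R R'$ with $\splitalg{R'}$ compatibly with the $\gp$-action, observe that under this identification the group basis of $V=R'[\gp]$ and the conjugates of $x$ are related by exactly the matrix $[X_{gh}]$ specialized at the coordinates of $x$, and then use that ``is a basis'' and ``determinant is a unit'' are both fppf-local on $R$. The index bookkeeping in the self-dual part is routine but must be done carefully to land on the correct group element in $\tr_{S/R}(xg(x))$.
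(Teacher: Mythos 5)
Your proof is correct, and the overall structure (establish the explicit bijection $\varphi\leftrightarrow x$, then match the two defining relations of $\A$ with trace-one and normality, then the $\asd$ relations with self-duality) matches the paper's. The trace-one step and the self-dual addendum are essentially identical to the paper's argument, modulo the index-chasing you flag as routine.

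Where you genuinely diverge is the middle step, which both you and the paper recognize as the crux: relating invertibility of $\varphi(\GDET{\gp})$ to $\{g(x)\}$ being an $R$-basis. The paper argues ``globally'' over $R$, observing that $\varphi(\GDET{\gp})^2$ equals the discriminant $\det\bigl(\tr_{S/R}(g(x)h(x))\bigr) \in R$, and then invoking the standard fact that for an \'etale algebra this discriminant is a unit if and only if the corresponding tuple is a basis. Your route instead passes to an fppf cover $R\to R'$ trivializing the torsor, identifies $S\otimes_R R'$ with $\splitalg{R'}$, and reads off the group determinant directly as the change-of-basis determinant there; you then descend ``is a basis'' and ``is a unit'' along $R\to R'$. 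Both arguments are valid, and both rely on faithful flatness in some form (the paper implicitly needs it to pass from ``$\varphi(\GDET{\gp})$ is a unit in $S$'' to ``$\varphi(\GDET{\gp})^2$ is a unit in $R$''). The paper's version is shorter because it leans on the discriminant--basis criterion for \'etale algebras as a black box; yours is more self-contained but requires the bookkeeping you rightly anticipate, namely checking that $\varphi(\GDET{\gp})$ as an element of $\splitalg{R'}$ has all its coordinate values equal (up to sign) to the single determinant $\GDET{\gp}(f)\in R'$ of the change-of-basis matrix.
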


\begin{proof}
There is a bijection from the set of $\gp $-equivariant ring homomorphisms
    $\varphi{\colon}\Z[\{X_g\}_{g \in G} ]\to S$ 
    to the set of elements of $S$, given by $\varphi\mapsto \varphi(X_1)$. Fix such a $\varphi$, and set $x:=\varphi(X_1)\in S$. Then $\varphi(X_g)=g^{-1}(x)$, and $\varphi(\GDET{\gp} )^2$ is the discriminant of the set $\{\varphi(X_g)\}_g=\{g(x)\}_g$. This discriminant is a unit if and only if $\{g(x)\}_g$ is a basis for $S/R$, and $\varphi$ kills $\sum_g X_g -1$ if and only if $\tr_{S/R}(x)=1$. This proves the first assertion of the lemma.

To prove the second, observe that $\varphi$ factors through $\asd$ if and only if
\[
\sum_{h\in \gp } hg(x)h(x) = 
    \begin{cases}
        1 & \text{if $g = 1$,}\\
        0 & \text{otherwise} 
    \end{cases} 
\]
for all $g\in \gp$, which is precisely the condition that $x$ is self-dual.
\end{proof}

The group $\gp$ is naturally identified with 
a constant subgroup scheme of $\sdg$, 
and thereby acts freely on $\sdg$ and $\ug$. 

\begin{definition}
    $\X$ is the quotient scheme $\UG {\gp}/\gp$, 
    $\Y$ is the quotient scheme $\NU{\gp}/\gp$.
\end{definition}

The following theorem says that the scheme $\X$ is a fine moduli space for $G$-algebras equipped with a normal element, and $\Y$ is a fine moduli space for $G$-algebras equipped with a self-dual element.

\begin{proposition}
\label{threp}
For any commutative ring $R$ there are bijections 
    $\X(R)\is\Mg(R)$ and $\Y(R)\is\MMg(R)$ 
    which are functorial in $R$.
\end{proposition}

\begin{proof}[First proof of Proposition~\ref{threp}] 
The scheme $\X = \ug/G$ represents 
    the stack quotient $[\ug/G]$ 
    as the $G$-action is free. 
An $R$-point of $[\ug/G]$ is, by definition, 
    a $G$-torsor $\Spec S \to \Spec R$ 
    together with a $G$-equivariant morphism 
    $\Spec S \to \ug$. 
The $G$-equivariant morphisms 
    $\Spec S \to \ug$ 
    are in bijection 
    with $G$-equivariant ring homomorphisms 
    $\mcO(\ug) = A \to S$, 
    which are in bijection 
    with normal elements of $S/R$ 
    by Lemma~\ref{lembij}. 
\end{proof} 

Here is a more elementary proof. 

\begin{proof}[Second proof of Proposition~\ref{threp}] 
Since $\ug\to\X$ is a $G$-torsor, $\A/\A^G$ is a $G$-algebra. 
Then Lemma~\ref{lembij} applied to 
    the identity map $\A\to\A$ implies $X_1\in \A$ is 
    a normal element.

Now, fix a ring $R$. There is a function
\begin{align*}
\gamma_1{\colon}\Hom(\A^G,R)&\to \Mg(R)\\
f&\mapsto \lp \A\otimes_{\A^G,f} R,X_1\otimes 1\rp.
\end{align*}
We also define a function $\gamma_2{\colon}\Mg(R)\to \Hom(\A^G,R)$ as follows. Given $(S,x)\in \Mg(R)$, Lemma~\ref{lembij} implies there is a unique $G$-equivariant homomorphism $f{\colon}\A\to S$ such that $f(X_1)=x$. Then $f$ takes $\A^G$ into $S^G=R$, and we define $\gamma_2(S,x) = f\big|_{\A^G}$. We claim that $\gamma_1$ and $\gamma_2$ are inverses.

In one direction, given $f{\colon}\A^G\to R$, we see directly that the natural map $\A\to \A\otimes_{\A^G,f} R$ is $G$-equivariant and takes $X_1$ to $X_1\otimes 1$. It follows that $\gamma_2(\gamma_1(f))$ is the natural map $\A^G\to \A^G\otimes_{\A^G,f}R=R$, so that $\gamma_2(\gamma_1(f))=f$. In the other direction, given $(S,x)\in\Mg(R)$, let $f{\colon}\A\to S$ be the $G$-equivariant homomorphism satisfying $f(X_1)=S$. Then
\[
\gamma_1(\gamma_2(S,x)) = \lp \A\otimes_{\A^G,f|\A^G} R,X_1\otimes 1\rp.
\]
Now, $S$ is an $R$-algebra, so $f$ extends uniquely to an $R$-algebra homomorphism
\[
\tilde{f}{\colon}\A\otimes_{\A^G,f|\A^G} R\to S.
\]
One see directly that $\tilde{f}$ is $G$-equivariant, and that $\tilde{f}(X_1\otimes 1)=f(X_1)=x$. Finally, every $G$-equivariant morphism of torsors is an isomorphism, so we conclude $\gamma_1(\gamma_2(S,x))\simeq (S,x)$. This proves that $\gamma_1$ and $\gamma_2$ are inverse bijections. Moreover, it is clear that $\gamma_1$ and $\gamma_2$ are natural in $R$.

To prove the statement about $\Y(R)$, we observe that $\gamma_1$ takes $\Hom(\asd^G,R)$ into $\MMg(R)$ because $X_1\in \asd$ is self-dual, and $\gamma_2$ takes $\MMg(R)$ into $\Hom(\asd^G,R)$ by 
    Lemma~\ref{lembij}.
\end{proof} 

\begin{remark}
Gundlach~\cite{gundlach} independently constructed $\X$ 
    and its orbit parametrization 
    for $G$-algebras with a normal element. 
Gundlach's construction uses the constraints 
    on the structure constants of a $G$-algebra 
    to cut out $\X$ inside $\AA^{|\gp|^2}$. 
In special cases the varieties $\X$ 
    have appeared before in the literature 
    \cite[\S VI.2]{serre_alg_gps}, \cite{suwa1}, 
    \cite{levi}, \cite{delone-faddeev-1964}, 
    \cite{wright-yukie}, 
    \cite{bhargava-hcliii}, \cite{bhargava-hcliv} 
    and \cite{bhargava-shnidman}. 
See also \cite{poonen-2008} for a related construction. 
\end{remark}

It is well-known that a $G$-torsor is trivial if and only if 
    it admits a section. 
For $G$-torsors $S/R$ obtained by pulling back $\ug \to \X$ 
    along an $R$-point $(S,x)$ of $\X$, 
    the next proposition gives a formula for such a section 
    after a suitable base change. 

\begin{proposition}\label{prop:fiber}  
    Let $(S,x)\in \X (R)$. 
Let $R'$ be an $R$-algebra and suppose there is an $R$-algebra homomorphism 
    $\varphi{\colon}S\to R'$. We have the following diagram: 
\begin{equation*}
\begin{tikzcd}
    &\Spec S \arrow[r] \arrow[d] & 
        \ug \arrow[d] \\
    \Spec R' \arrow[ur,dashed] \arrow[r]&\Spec R \arrow[r,"(S{,}x)"] & \X .
\end{tikzcd}
\end{equation*}
Then $(S\otimes_R R',x\otimes 1)\in \X (R')$ is the image of the $R'$-valued unit 
\[
u=\sum_{g\in \gp } \varphi(g(x))[g^{-1}]\in \ug (R')
\]
under the natural morphism $\ug \to \X $. 
\end{proposition} 

For the proof, we will use the natural action of $\ug$ on 
the homogeneous space $\ug/G = \X$. 
For $u=\sum_g a_g[g]\in \ug (R)$ and $(S,x)\in \X (R)$ 
this action is given by 
\begin{equation}\label{eqn:actionformula} 
u(S,x) = \lp S,\sum_{g\in\gp} a_g g(x)\rp.
\end{equation} 

\begin{proof} 
The set of morphisms $\Spec R' \to \Spec S$ over $\Spec R$ 
    is in bijection with 
    the set of sections of $\Spec S \times_R \Spec R' \to \Spec R'$ 
    by the universal property of the fiber product. 
Thus the existence of $\varphi$ implies that 
    the pullback of the $G$-torsor $S/R$ to $R'$ 
    is isomorphic to the trivial $G$-torsor over $R'$. 
Recall the coordinate ring of the trivial $\gp$-torsor over $R'$ 
    is the $R'$-algebra $\splitalg{R'}$ of set-theoretic functions 
    $f \colon \gp \to R'$ under pointwise operations 
    with $G$-action given by $g(f)(h) = f(hg)$. 
We have the isomorphism of $G$-algebras over $R'$ given by 
\begin{align*}
    S\otimes_R R'&\xrightarrow{\sim} \splitalg{R'}\\
x\otimes r&\mapsto\big[g\mapsto r\varphi(g(x))\big].
\end{align*}
This isomorphism maps $x \otimes 1$ to 
    the function $g \mapsto \varphi(g(x))$, 
    which is equal to $u\chi_{\{1\}}$ where 
    $u$ is the $R'$-valued unit given by 
    $u=\sum_{g\in\gp} \varphi(g(x))[g^{-1}] \in \ug (R')$ 
    and $\chi_{\{1\}}$ is the characteristic function of the singleton $\{1\}$ 
    containing the identity element $1 \in G$. 
In terms of points of $\X$, this means that 
\begin{equation} 
    (S \otimes_R R',x \otimes 1) = 
    \lp \splitalg{R'},g\mapsto \varphi(g(x))\rp = 
    u\lp \splitalg{R'},\chi_{\{1\}}\rp. 
\end{equation} 
The result now follows from the fact that $(\splitalg{R'},\chi_{\{1\}})\in\X(R')$ 
    is the image of $1\in \ug(R')$ under the $\ug$-equivariant quotient morphism $\ug\to \X$. 
\end{proof} 



\section{Descent for \texorpdfstring{$G$}{G}-line bundles}\label{sec:three}  

In this section we prove there is a unique 
line bundle $\DLB$ over $\PPP/\gp$ 
whose pullback to $\PPP$ is equal to $-\CB{\PPP}$ 
(Theorem~\ref{thm:discriminantlinebundle}). 
We show that $\Pic(\PPP/\gp) \cong \ZZ$ and 
$\DLB$ generates the subgroup of index $n/e$ 
where $n$ (resp.~$e$) denotes the order (resp.~exponent) of $\gp$ 
(Theorem~\ref{thm:dlbintrinsicdefn}). 
We also prove that $\DLB$ is globally generated 
and its global sections 
define an immersion of $\X$ into projective space 
(Theorem~\ref{thm:immersion}). 

\subsection{Descent for \texorpdfstring{$G$}{G}-line bundles} 


Let $\algvar$ denote a projective variety 
equipped with an action by a finite group $\gp$, 
all defined over a characteristic zero field $\nf$. 
We assume the $\gp$-action on the structure sheaf of $Y$ 
is $\mathcal O(Y)$-linear. 
Suppose $\algvar$ admits 
the action of another finite group $\Gamma$ 
commuting with the action of $\gp$. 
Consider a $\gp$-line bundle $\vb$ over $\algvar/\Gamma$. 
When is $\vb$ isomorphic to the pullback of 
a line bundle from $\algvar/(\gp \times \Gamma)$? 
Equivalently, when does $\vb$ vanish under the map 
\begin{equation} 
\Pic_G(\algvar/\Gamma) \to 
    \frac{\Pic_G(\algvar/\Gamma)}{\im(\Pic(\algvar/(G \times \Gamma)) \to \Pic_G(\algvar/\Gamma))}? 
\end{equation} 
If $\Gamma$ acts freely on $\algvar/G$, 
the next lemma shows this can be determined 
by pulling back to $\algvar \to \algvar/G$ 
and resolving the question there. 

\begin{lemma}\label{lemma:picardimages}
Suppose $\algvar/G \to \algvar/(G \times \Gamma)$ 
    is a Galois covering with Galois group $\Gamma$. 
Then a $G$-line bundle on $\algvar/\Gamma$ 
    descends to $\algvar/(G \times \Gamma)$ 
    if and only if its pullback to $\algvar$ 
    descends to $\algvar/G$. 
\end{lemma}

Equivalently, the following natural map is injective: 
\begin{equation}\label{eqn:maponpicardimages} 
    \frac{\Pic_G(\algvar/\Gamma)}{\im(\Pic(\algvar/(G \times \Gamma)) \to \Pic_G(\algvar/\Gamma))}
    \to 
    \frac{\Pic_G(\algvar)}{\im(\Pic(\algvar/G) \to \Pic_G(\algvar))}
\end{equation} 



\begin{proof} 
Let $\vb$ be a $G$-line bundle over $\algvar/\Gamma$ 
    which represents some class in 
    the kernel of \eqref{eqn:maponpicardimages}. 
We have diagrams: 
\begin{equation} 
    \begin{tikzcd}[column sep = tiny]
    &\algvar\arrow[dl] \arrow[dr]&\\
    \algvar/G\arrow[dr]&&\algvar/\Gamma \arrow[dl]\\
    &\algvar/(G \times \Gamma)& 
\end{tikzcd}\hspace{40pt}%
\begin{tikzcd}
    &\vb_1&\\
    \vb_2\arrow[ur, mapsto]&&\vb_4 \arrow[ul, mapsto]\\
    &\vb_3\arrow[ul, mapsto]\arrow[ur, mapsto]& 
\end{tikzcd}
\end{equation} 
where $\vb_1$ is the pullback of $\vb$, 
$\vb_2$ is any line bundle which pulls back to $\lb_1$, 
$\vb_3$ is the quotient of $\vb_2$ by $\Gamma$, 
and $\vb_4$ is the pullback of $\vb_3$ 
    (the quotient of $\vb_2$ is 
    for its $\Gamma$-linearization coming from $\lb_1$, 
    and the quotient exists by descent along torsors 
    since $\Gamma$ acts freely). 
As the left diagram commutes, $\vb_4$ pulls back to $\vb_1$ in $\Pic_G(\algvar)$; 
    however $\vb$ also pulls back to $\vb_1$, 
    so to prove \eqref{eqn:maponpicardimages} is injective 
    it suffices to show that 
    $\Pic_G(\algvar/\Gamma) \to \Pic_G(\algvar)$ is injective. 

    There is a commutative diagram with exact rows 
    \cite[2.2]{kkv-1989}: 
\begin{equation}\label{eqn:diagram1} 
\begin{tikzcd}
    H^1(G, \mathcal O(\algvar/\Gamma)^\times) \ar[r]\ar[d]
      & \Pic_G(\algvar/\Gamma) \ar[r]\ar[d]
      & \Pic(\algvar/\Gamma)\ar[d] \\
    H^1(G, \mathcal O(\algvar)^\times) \ar[r]
      & \Pic_G(\algvar) \ar[r]
      & \Pic(\algvar) . 
\end{tikzcd}
\end{equation} 
The action of $G$ on $\mathcal O(\algvar)^\times$ 
    and $\mathcal O(\algvar/\Gamma)^\times$ 
    is trivial by assumption. 
Thus the left column is injective, 
    so it now suffices to show that 
    $\Pic(\algvar/\Gamma) \to \Pic(\algvar)$ is injective. 
Now the Hochschild--Serre spectral sequence 
\begin{equation} 
    H^p(\Gamma,H^q_{\mathrm{et}}(\algvar,\GG_m)) 
    \Rightarrow H^{p+q}_{\mathrm{et}}(\algvar/\Gamma,\GG_m) 
\end{equation} 
yields the exact sequence 
\begin{equation} 
    1 \longrightarrow H^1(\Gamma, \mathcal O(\algvar)^\times) 
    \longrightarrow \Pic(\algvar/\Gamma) 
    \longrightarrow \Pic(\algvar)^\Gamma 
    \longrightarrow H^2(\Gamma,\mathcal O(\algvar)^\times) 
    \longrightarrow \cdots . 
\end{equation} 
From this it suffices to show that 
$H^1(\Gamma, \mathcal O(\algvar)^\times) = 1$. 
As $\algvar$ is projective, 
$\mathcal O(\algvar)/\mathcal O(\algvar/\Gamma)$ 
is a field extension with Galois group $\Gamma$, 
so $H^1(\Gamma, \mathcal O(\algvar)^\times)= 1$ 
by Hilbert's theorem 90. 
\end{proof} 



Using the lemma we can give a simple criterion 
for descent for line bundles 
even when the group action is not free. 

\begin{proposition}\label{prop:imageofpi} 
The image of $\Pic(\algvar/\gp) \to \Pic_\gp(\algvar)$ 
is the subset of isomorphism classes of $G$-line bundles $\vb$ over $\algvar$ 
    satisfying the following condition: 
    \begin{enumerate}
        \item[$(\ast)$] 
            the stabilizer subgroup $\gp_P$ acts trivially 
            on $\vb_P$ 
            for every $P \in \algvar(\overline{\nf})$. 
    \end{enumerate}
\end{proposition} 

\begin{remark}
A result of Mumford \cite[Cor.~1.6]{MR1304906} says that 
    if $\algvar$ is normal and proper, 
    with an action of 
    a connected linear group $G$, 
    and $\vb$ is a $G$-line bundle on $\algvar$, 
    then some positive power 
    $\vb^{\otimes e}$ is $G$-linearizable. 
Proposition~\ref{prop:imageofpi} 
    is the analogous result for finite $G$, 
    with $e$ the exponent of $G$. 
\end{remark}

\begin{proof} 
In the algebraically closed setting 
    this is \cite[Prop.~4.2]{kkv-1989}. 
We will reduce to this case using Lemma~\ref{lemma:picardimages}. 
Pulling back $\lb$ to $\algvar_{\overline{\nf}}$ shows 
    the condition ($\ast$) is clearly satisfied if $\lb$ descends to $\algvar/G$. 

We first observe that $\algvar_{L}/G = (\algvar/G)_{L}$ 
    for any $\nf$-algebra $L$. 
Indeed $\algvar_{L} \to (\algvar/G)_{L}$ is $G$-invariant 
so we get a map $\algvar_L/G \to (\algvar/G)_{L}$. 
To show this affine map 
    is an isomorphism we must show that 
    $\mcO_{(\algvar/G)_L} \to \mcO_{\algvar_L/G}$ 
    is an isomorphism of $\mcO_{(\algvar/G)_L}$-algebras. 
In fact these sheaves are already equal on the level of presheaves, 
namely the presheaves $U \mapsto H^0(G,\mcO_\algvar(\pi^{-1}(U))) \otimes L$ 
    and $U \mapsto H^0(G,\mcO_\algvar(\pi^{-1}(U)) \otimes L)$ 
    sheafify to $\mcO_{(\algvar/G)_L}$ and $\mcO_{\algvar_L/G}$, respectively,  
and these are isomorphic by the universal coefficient theorem: 
    for any $G$-module $N$ and $n \geq 0$ 
    we have the exact sequence \cite[Prop.~4.18]{jantzen}: 
\begin{equation} 
    0 \longrightarrow H^n(G,N) \otimes L 
    \longrightarrow H^n(G,N \otimes {L})
    \longrightarrow \operatorname{Tor}^\nf_1(H^{n+1}(G,N),L) 
    \overset{=}{\longrightarrow} 0. 
\end{equation} 

Now if ($\ast$) holds then $\lb \otimes \overline{\nf}$ 
    is the pullback of a line bundle on 
    $\algvar_{\overline{\nf}}/G$. 
This bundle on 
    $\algvar_{\overline{\nf}}/G = (\algvar/G)_{\overline{\nf}}$ 
    descends to $(\algvar/G)_{L} = \algvar_L/G$ 
    for some finite Galois extension $L/\nf$. 
Applying Lemma~\ref{lemma:picardimages} to 
    $\algvar_L$ (regarded over $\nf$ via 
    the structure map $\algvar_L \to \Spec K \to \Spec \nf$) 
    and $\Gamma = \Gal(L/\nf)$ 
    shows $\lb$ is in 
    the image of $\Pic(\algvar/G)\to\Pic_G(\algvar)$. 
\end{proof} 

When the action of $G$ on $\algvar$ is free, 
the proof of Lemma~\ref{lemma:picardimages} 
shows that $\Pic(\algvar/G) \to \Pic(\algvar)$ is injective. 
Unfortunately the spectral sequence there 
    is unavailable for non-free actions, 
but there is nonetheless 
an easily checked criterion for injectivity. 

\begin{lemma}\label{lemma:uniquedescent} 
Assume that $Y$ is geometrically integral. 
Suppose that for each nontrivial character 
    $c \colon \gp \to \nf^\times$ 
    there is a point $P \in \algvar(\algclosure{\nf})$ with 
    $c(\gp_{P}) \neq 1$. 
    Then the homomorphism
\begin{equation} 
\Pic(\algvar/G) \to \Pic(\algvar) 
\end{equation} 
is injective. 
\end{lemma}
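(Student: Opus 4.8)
The plan is to factor $\pi^\ast$ through the group $\Pic_\gp(\algvar)$ of $\gp$-line bundles and then invoke Proposition~\ref{prop:imageofpi}. The pullback $\pi^\ast\colon\Pic(\algvar/\gp)\to\Pic(\algvar)$ is the composition of $\pi^\ast\colon\Pic(\algvar/\gp)\to\Pic_\gp(\algvar)$ with the forgetful homomorphism $\Pic_\gp(\algvar)\to\Pic(\algvar)$ that discards the linearization. Since the first arrow is injective by Proposition~\ref{prop:imageofpi}, the kernel of the composition is carried by $\pi^\ast$ isomorphically onto the intersection, inside $\Pic_\gp(\algvar)$, of the image of $\pi^\ast$ with the kernel of the forgetful map. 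Hence it suffices to show that this intersection is trivial.

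First I would identify the kernel of the forgetful map. Its elements are represented by $\gp$-linearizations of the trivial line bundle $\mcO_\algvar$. Because $\algvar$ is proper and, by our standing assumptions, geometrically integral over $\nf$, one has $H^0(\algvar,\mcO_\algvar^\times)=\nf^\times$, and $\gp$ acts trivially on this group because the $\gp$-action on $\algvar$ is defined over $\nf$. The standard description of the set of linearizations on a fixed line bundle (see \cite{kkv-1989}) then identifies the isomorphism classes of $\gp$-linearizations of $\mcO_\algvar$ with $H^1(\gp,\nf^\times)=\Hom(\gp,\nf^\times)$, the group of $\nf$-valued characters of $\gp$. Concretely, in the linearization attached to a character $c$, an element $g\in\gp$ fixing an $\nf$-rational point $P$ acts on the fiber over $P$ by multiplication by $c(g)$ (or $c(g)^{-1}$, depending on the right-action sign convention, which is immaterial here).

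Next I would feed these bundles into the descent criterion $(\ast)$ of Proposition~\ref{prop:imageofpi}. Suppose $c$ is a nontrivial character. By hypothesis there is a point $P\in\algvar(\nf)$ with $c(\gp_P)\neq 1$; viewing $P$ as a closed point of $\algvar$ with residue field $\nf$, we see that $\gp_P$ acts nontrivially on the fiber over $P$ of the $c$-linearized trivial bundle, so this bundle violates $(\ast)$ and therefore does not lie in the image of $\pi^\ast$. Consequently the only character whose associated $\gp$-line bundle lies in the image of $\pi^\ast$ is the trivial one, representing $0\in\Pic_\gp(\algvar)$. Thus the image of $\pi^\ast$ meets the kernel of the forgetful map only in $0$, and combining this with the first paragraph gives $\ker\bigl(\pi^\ast\colon\Pic(\algvar/\gp)\to\Pic(\algvar)\bigr)=0$.

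The step demanding the most care is the second one: fixing the bijection between $\gp$-linearizations of $\mcO_\algvar$ and $\Hom(\gp,\nf^\times)$ compatibly with the right-action conventions used earlier, and verifying that under it a stabilizer acts on each fiber through the restriction of the corresponding character, so that the pointwise criterion $(\ast)$ can be read off directly from $c$. The other ingredients --- the factorization of $\pi^\ast$ through $\Pic_\gp(\algvar)$, the elementary group-cohomology computation, and the use of properness together with geometric integrality to pin down global units --- are routine.
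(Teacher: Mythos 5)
Your proof is correct and follows essentially the same route as the paper: factor $\pi^\ast$ through $\Pic_\gp(\algvar)$, use Proposition~\ref{prop:imageofpi} for injectivity of the first arrow, identify the kernel of the forgetful map with $\Hom(\gp,\nf^\times)$ via $H^1(\gp,\GS{\algvar}^\times)$ and properness, and apply the criterion $(\ast)$ together with the hypothesis on fixed points. The only cosmetic difference is at the end: the paper fixes a descendable $\gp$-line bundle $\DLB$ and shows the twist $\DLB\otimes c$ fails $(\ast)$, whereas you argue directly that the $c$-linearized trivial bundle fails $(\ast)$, so the image of $\pi^\ast$ meets $\ker(\text{forget})$ only in $0$ --- an equivalent (and arguably slightly cleaner) way to close the same argument.
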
 

The hypothesis holds for instance if $\algvar \to \algvar/G$ has 
a totally ramified $\overline{\nf}$-point. 

\begin{proof} 
The pullback homomorphism 
    $\Pic(\algvar/\gp) \to \Pic_\gp(\algvar)$ is injective 
    (since the $G$-linearization suffices to undo the pullback), 
while the kernel of the homomorphism $\Pic_\gp(\algvar) \to \Pic(\algvar)$ 
    which forgets the $\gp$-linearization 
    can be identified with the group of 
    $\gp$-linearizations of the trivial bundle, namely  
    $H^1(\gp,\GS{\algvar}^\times)$ \cite[(2.2)]{kkv-1989}. 
As $\algvar$ is proper and geometrically integral, 
this group can be identified with $\Hom(\gp,\nf^\times).$ 
However the pullback of a line bundle on $\algvar/G$ 
to $\algvar$ carries a unique $G$-linearization for which it descends. 
Indeed, if $\DLB$ is a $\gp$-line bundle on $\algvar$ 
which descends to $\algvar/\gp$, 
and $c$ is a nontrivial $\nf$-valued character,  
then 
    the $\gp_{P}$-action on 
    $(\DLB \otimes c)_{P}$ is nontrivial, 
    so $\DLB \otimes c$ does not descend to $\algvar/\gp$ 
    (Proposition~\ref{prop:imageofpi}). 
\end{proof} 

\subsection{Quotient of the regular representation}\label{sec:quotientreg}

Let $V$ denote the {regular representation of $\gp$}, 
and write $\PPP$ for the projective space 
$\PPP(V)=\Proj \Sym V^\vee$ of $V$ 
regarded as a variety over $\QQ$. 
Let $\pi$ denote the quotient map $\PPP \to \PPP/\gp$. 


\begin{proposition}\label{prop:pullbackcanonicaldivisors}
    $\pi^\ast \CB{\PPP/\gp} = \CB{\PPP}$ if $|G| \neq 2$ 
    and $\pi^\ast \CB{\PPP/\gp} = 2\CB{\PPP}$ if $|G| = 2$. 
\end{proposition}

\begin{proof} 
First observe $\PPP/\gp$ is normal since it is 
    the quotient of a normal variety by a finite group. 
Now suppose $\PPP \to \PPP/\gp$ is not \'etale in codimension one, 
    i.e.~there is some irreducible codimension one subvariety $D$ 
    in the support of $\Omega^1_{\PPP\to\PPP/\gp}$. 
Then some $g \neq 1$ fixes the generic point $\eta$ of $D$. 
In fact $D$ is a hyperplane since 
    otherwise $g$ would fix 
    $|\gp|-1$ linearly independent vectors in 
    the affine cone of $D$, 
    any line passing through $0 \in V$ would intersect 
    the affine cone of $D$ at a nonzero point, 
    and $g$ would be $1$. 
We see that $g$ fixes a codimension one hyperplane 
    (the affine cone of $D$), 
so $G$ contains a pseudoreflection. 

Suppose $|\gp| \neq 2$. 
Then the regular representation contains no pseudoreflections 
    and so $\PPP \to \PPP/\gp$ is \'etale in codimension one. 
We may take an open subset $U \subset \PPP$ 
    whose complement has codimension at least two 
    such that $\pi|_U$ is \'etale 
    and $\pi(U)$ is contained in 
    the smooth locus $V$ of $\PPP/\gp$. 
Then $\pi^\ast\Omega_{V/\QQ}^1$ and $\Omega_{\PPP/\QQ}^1$ 
    are isomorphic over $U$ 
    which means 
    $\pi^\ast \Omega_{(\PPP/\gp)/\QQ}^{|\gp|-1}$ and 
    $\Omega_{\PPP/\QQ}^{|\gp|-1}$ are isomorphic over $U$. 
This shows their supports 
    $\pi^\ast \CB{\PPP/\gp}$ and $\CB{\PPP}$ 
    are equal away from a codimension two closed subset 
    and are therefore equal everywhere. 

For $|G| = 2$ the map $\PPP \to \Proj \QQ[u,v]$ 
    given by $(u,v) = ((x+y)^2, (x-y)^2)$ 
    induces an isomorphism $\PPP/\gp \cong \Proj \QQ[u,v]$, 
and the anticanonical bundle on $\Proj \QQ[u,v]$ 
    pulls back to $-2K_\PP$. 
\end{proof} 

\begin{theorem}\label{thm:discriminantlinebundle} 
There is a line bundle $\DLB$ on $\PPP/\gp$, 
unique up to isomorphism, 
    satisfying $\pi^\ast \DLB = \ACB{\PPP}.$ 
If $|\gp| \neq 2$ then $\DLB+\CB{\PPP/\gp}$ 
    is torsion in 
    the divisor class group of $\PPP/\gp$ 
    and if $|\gp| = 2$ then 
    $2\DLB = -\CB{\PPP/\gp}$. 
\end{theorem} 

\begin{proof} 
Let $e$ be the exponent of $G$. 
Then $\Pic(\PPP)^{\times e}$ is contained in 
    the image of $\Pic(\PPP/\gp) \to \Pic(\PPP)$ 
    by Proposition~\ref{prop:imageofpi}, 
    and we conclude the existence of $\DLB$. 
Uniqueness follows from the existence of the totally ramified point 
    $[1:\cdots:1]$ for $\pi$ and Lemma~\ref{lemma:uniquedescent}. 
For the second assertion note that $\CB{\PPP/\gp}$ 
    is $\QQ$-Cartier 
    \cite[Prop.~5.20]{kollar-mori-1998} 
    so if $n\CB{\PPP/\gp}$ is Cartier 
    and $|\gp| \neq 2$ then 
    $$
    \pi^\ast(n(\DLB+\CB{\PPP/\gp}))  = 0. 
    $$ 
    However $\pi$ is totally ramified at $[1:\cdots:1]$ 
    so $\pi^\ast$ is injective (Lemma~\ref{lemma:uniquedescent}). 
\end{proof} 

\begin{remark}
With a bit more work one can show that 
$\DLB + \CB{\PPP/\gp}$ is $2$-torsion 
and that $\DLB + \CB{\PPP/\gp}$ is trivial 
if the Sylow $2$-subgroup of $\gp$ is trivial or non-cyclic. 
This shows that $\PPP/\gp$ is $\QQ$-Gorenstein of index $\leq 2$. 
\end{remark}


\begin{corollary}\label{cor:picardgroup}
    The Picard group of $\PPP/\gp$ is $\ZZ$. 
\end{corollary}

\begin{proof} 
    $\pi$ has a totally ramified point 
    so $\Pic(\PPP/\gp) \to \Pic(\PPP)$ 
    is injective 
    by Lemma~\ref{lemma:uniquedescent}, 
    however the pullback of $\DLB$ to $\PPP$ is 
    nontrivial. 
\end{proof} 

Let $n$ (resp. $e$) denote the order 
    (resp. exponent) of $\gp$. 

\begin{theorem}\label{thm:dlbintrinsicdefn}
    Let $\DLB_0$ be the ample generator of $\Pic(\PPP/\gp)$. 
    Then 
        $\DLB_0^{\frac{n}{e}} = \DLB$.  
\end{theorem}

\begin{proof} 
The pullback of $\DLB_0$ to $\PPP$ will be $\mathcal O(t)$ 
for the smallest positive integer $t$ such that 
    $\mathcal O(t)$ descends to $\Pic(\PPP/\gp)$ for some $G$-linearization. 
First equip $\mcO(t)$ with the $\gp$-linearization 
    coming from the natural $\gp$-action on $\mcO(1)$. 
If $c$ is any nontrivial character of $G$, 
    then $G$ will act on 
    the fiber of $\mathcal O(t) \otimes c$ 
    over $[1:\cdots:1]$ by $c$ 
    so $\mathcal O(t) \otimes c$ cannot descend (Proposition~\ref{prop:imageofpi}). 
This means that if $\mcO(t)$ descends for some $G$-linearization 
    it must be its natural $G$-linearization. 
Now if $g$ is an element of $G$ with order $d$ then 
    $\langle g \rangle$ acts as $\zeta^{-1}$ on the line spanned by 
        $\sum_{j = 0}^{d-1}
            \zeta^j \grb{\el^j}$ 
($\zeta$ a primitive $d$th root of unity), 
    so $g$ fixes the corresponding point of $\PPP(\overline{K})$ 
    and acts by $\zeta^t$ 
    on the fiber of $\mcO(t)$ over this point. 
The only way that $\zeta^t = 1$ 
    for all $\el \in \gp$ is if $e$ divides $t$, 
    and this condition is also sufficient for descent. 
We conclude that $\mcO(e)$ 
    descends to a line bundle $\DLB_0$ on $\PPP/\gp$ 
    and $\DLB_0^{\frac ne} = \DLB$. 
\end{proof} 

\subsection{\texorpdfstring{$\DLB$}{L} determines an immersion of \texorpdfstring{$\Xg$}{X}}


\begin{theorem}\label{thm:immersion}
$\DLB$ is globally generated 
and its global sections restrict to an immersion of $\Xg$ 
    into $\PP^{N}$, where $N+1$ is the dimension of 
    the linear subspace of homogeneous degree $|\gp|$ 
    $\gp$-invariants 
    in $\Sym V^\vee$. 
\end{theorem}

\begin{remark}
Using Molien's theorem one can show that 
    $N$ is equal to the sum over divisors $d$ of $|G|$ of 
    $\binom{2d-1}{d} \cdot \#\{g \in \gp: g\text{ has order }|G|/d\}$. 
\end{remark}


\begin{proof} 
    To show that the rational map 
    $\varphi \colon \PPP/\gp \dashrightarrow \PP^N$
    determined by $\DLB$ is defined on all of $\PPP/\gp$, 
    we may assume without loss of generality that 
    $\nf$ is algebraically closed. 
    Furthermore, as the property of being 
    a quasi-compact immersion 
    is stable under faithfully flat descent 
    we may also assume that 
    $\nf$ is algebraically closed for the claim that 
    $\varphi$ restricts to an immersion on $\Xg$. 

    Let $n$ denote the order of $\gp$. 
    We will show that 
    for any two distinct closed points 
    of $\PPP/\gp$ 
    there exists a global section of $\DLB$ 
    vanishing at one point but not the other. 
    Since pullback along $\pi$ induces an isomorphism 
        $H^0(\PPP/\gp,\DLB) \xrightarrow{\sim} 
        H^0(\PPP,\mcO(n))^{\gp}$,  
    it is the same to 
    find a $\gp$-invariant global section of $\mcO(n)$ 
    separating two arbitrary closed $\gp$-orbits in $\PPP$. 
    Suppose $\pt$, $\pttwo\in\PPP(\algclosure{\nf})$ are 
    not in the same $\gp$-orbit. 
    Choose a hyperplane $H\subset\PPP$ passing through $\pt$ 
    but disjoint from the orbit of $\pttwo$, and 
    let $t$ be a global section of 
    $\cO(1)$ whose zero locus is $H$. 
    Then
    \begin{equation}\label{eqn:globalsectionpq}
        \prod_{\el\in \gp}{^\el t}
    \end{equation}
    is a $\gp$-invariant global section of 
    $\cO(n)$ vanishing at $\pt$ but not at $\pttwo$. 
    The existence of such sections also  
    shows that $\DLB$ is globally generated 
    so $\varphi$ is defined on all of $\PPP/\gp$. 
    The same fact shows that $\varphi$ is injective 
    on closed points of $\PPP/\gp$.  


    We next show that $\varphi$ separates tangent vectors 
    away from isotropy. 
    It is equivalent to show $\tp$ separates tangent vectors 
    away from isotropy since the quotient map  
    $\PPP\to \PPP/\gp$ is an isomorphism on tangent spaces 
    away from isotropy. 
    Suppose $\pt\in\PPP(\algclosure{\nf})$ has 
    trivial isotropy group and 
    $0\neq v\in T_\pt(\PPP)$. 
    Choose a hyperplane $H\subset\PPP$ passing through $\pt$, 
    disjoint from $\{\el(\pt):\el\in \gp-\{1\}\}$, and 
    not tangent to $v$. 
    Let $t$ be a global section of $\cO(1)$ whose zero locus is $H$. 
    Then
    \[
    \prod_{\el\in \gp}{^\el t}
    \]
    is a $\gp$-invariant global section of 
    $\cO(n)$ vanishing at $\pt$ 
    such that $v$ is not tangent to its zero locus. 
    This implies $\tp_* v$ is not zero 
    (cf.~Remark II.7.8.2 of \cite{hartshorne}). 
    We conclude $\tp_*$ is injective on $T_\pt(\PPP)$. 
    Since $\gp$ acts freely on $\UG{\gp}$ 
    we conclude that $\varphi$ separates tangent vectors 
    on $\Xg$. 
    
    To conclude the proof 
    it suffices to show that the restriction 
    $\varphi|_{\Xg} \colon \Xg \to \varphi(\Xg)$ 
    of $\varphi$ to $\Xg$ is proper 
    and that $\varphi(\Xg)$ is 
    a locally closed subset of $\PP^N$. 
    This will imply that 
    $\varphi|_{\Xg}$ is a closed immersion 
    of $\Xg$ into an open subset of $\PP^N$ 
    by \cite[Prop.~12.94]{gortz-wedhorn}, 
    obtaining the desired conclusion that the composition 
    $\Xg \to \varphi(\Xg) \to \PP^N$ is an immersion. 
    Observe that 
    \begin{equation*}
    \begin{tikzcd}
        \Xg \arrow[r,"\subset"] 
            \arrow[d,"\varphi|_{\Xg}"] & 
            \PPP/\gp \arrow[d,"\varphi"] \\
        \varphi(\Xg) \arrow[r,"\subset"] & \PP^N 
    \end{tikzcd}
    \end{equation*}
    is a pullback diagram because $\varphi$ is injective.  
    As the property of being proper is 
    preserved under basechange 
    this shows that $\varphi|_{\Xg}$ is proper. 
    To see that $\varphi(\Xg)$ is locally closed 
    we first observe that 
    the subset of $\PPP$ where the group determinant 
    $\GDET{\gp}$ is invertible is equal to $\UG{\gp}$. 
    As $\UG{\gp}$ is a $\gp$-stable affine open subset, 
    its quotient $\Xg$ is an open subset of $\PPP/\gp$. 
    Then we have that 
    $\varphi(\Xg \sqcup \Xg^c)
    =
    \varphi(\Xg) \sqcup \varphi(\Xg^c)$
    which shows that 
    \begin{equation}\label{eqn:locallyclosed}
    \varphi(\Xg) = 
    \varphi(\PPP/\gp) \cap (\PP^N - \varphi(\Xg^c)).
    \end{equation}
    As $\varphi$ is a proper morphism it is closed 
    so \eqref{eqn:locallyclosed} 
    shows $\varphi(\Xg)$ is locally closed. 
\end{proof}



\section{Height formulas}\label{sec:heightformulas} 

In this section we prove 
Theorem~\ref{thm:thm1} and Theorem~\ref{thm:stablemultiplierorder}. 
For each rational prime $p$ 
let $|\cdot|_p$ denote the norm on $\algclosure{\QQ_p}$ 
satisfying $|p|_p=1/p$ and let $|\cdot|_\infty$ 
denote the usual complex absolute value. 
If $\nfe/\QQ$ is any finite extension, 
then the set of places $w$ of $\nfe$ 
over a fixed place $v$ of $\QQ$ 
is naturally in bijection with field homomorphisms 
$j \colon \nfe \to \algclosure{\QQ_v}$ up to isometry,  
and we write $|\cdot|_w$ for $|j(\cdot)|_v$. 
We write $\norm{\cdot}_\infty$ for 
    the canonical norm on Minkowski space $\nfe_\RR$. 
For any $y \in \nfe \subset \nfe_\RR$ it is given by 
\begin{equation}\label{eqn:minkowskinorm}
    \norm{y}_\infty=
        \left(\sum_{j: \nfe \to \CC}
            d_j
            |j(y) |_\infty^2
        \right)^{1/2}
\end{equation}
where $j$ runs over the complex embeddings of $\nfe$ 
up to isometry (one $j$ for each pair of complex embeddings) 
and $d_j$ is $1$ if $j$ is real and $2$ if $j$ is complex 
(cf.~\cite{neukirch}). 
For a product $L= \nfe\times \cdots \times \nfe$ 
    we extend the norm to $L_\RR$ 
    by setting 
\begin{equation}
    \norm{y}_\infty = 
    \left(\sum_{i=1}^{\dim_\nfe L}
    \norm{y_i}_\infty^2\right)^{1/2} . 
\end{equation}

\begin{theorem*}[Theorem~\ref{thm:thm1}]
Let $\phi \colon \PPP/G \to \PP^N$ be a non-constant morphism. 
Let $d_\phi$ be the degree of the composite map 
    $\PPP \to \PPP/\gp \xrightarrow{\phi} \PP^N$. 
Then for all $\pt = (L,x) \in \X(\QQ)$, 
\begin{equation}
    h(\phi(\pt)) =
    d_\phi\log{\norm{x}_\infty} - 
    \frac{d_\phi}{[\nfe:\QQ]}\log{N(J)} + O(1) 
\end{equation}
    for a bounded function $O(1)$. 
\end{theorem*}

\begin{proof} 
Recall that $\PPP/\gp$ has Picard rank one 
    (Corollary~\ref{cor:picardgroup}). 
As we are only proving \eqref{eqn:newmainformula} 
    up to $O(1)$, 
    the claimed formula follows for any morphism $\phi$ 
    once we have proven it for a single morphism $\phi$. 
We use $\lb$ to determine such a $\phi$ 
    as $\lb$ is globally generated 
    (Theorem~\ref{thm:immersion}). 
Then $d_\phi = |G|$ 
and the formula to be shown is that 
    $$h(\phi(P)) =
    |G|\log{\norm{x}_\infty} - 
    \frac{|G|}{[\nfe:\QQ]}\log{N(J)} + O(1).$$ 

Let $\varphi \colon L \to \CC$ 
    be any $\QQ$-algebra homomorphism. 
    By Proposition~\ref{prop:fiber}, 
    the fiber of $\pi$ over $(L,x)$ 
    consists of the $\gp$-translates of the unit given by 
    \begin{equation}
        \label{eqn:affinecoordsofunit}
        u = \sum_{\el \in \gp}
            \varphi(\el(x))\grb{\el^{-1}} \in \UG{\gp}(\nfe). 
    \end{equation}
We will use the Weil height $H$ on $\PPP(\algclosure{\QQ})$ 
    associated to $\mcO(1)$ 
    for which $H(u)$ is 
    \begin{equation}
    \prod_{w \in \INFPRIMES{\nfe}}
        \ds\left(\sum_{g \in \gp} 
            \left|
            \varphi(g(x))
            \right|_w^2
        \right)^{{d_w}/(2[\nfe:\QQ])}
    \prod_{w \in \FINPRIMES{\nfe}}
        \max_{g \in \gp}
                \left|
                \varphi(g(x))
                \right|_w^{{d_w}/[\nfe:\QQ]}
                =H_\infty(u)  H_f(u)
    \end{equation}
    where $d_w$ is the local degree of $\nfe$ at $w$. 
(We call this the \defn{standard metric on $\mathcal O(1)$}.) 
By the theory of heights, 
\begin{equation} 
    h(\phi(L,x)) = |G|\log H(u) + O(1)  
\end{equation} 
    for a bounded function $O(1)$. 

Let $d_\infty$ be 
    the local degree of $\nfe$ at any of its infinite places. 
The infinite component $H_\infty$ of the height is none other  
    than the canonical norm on Minkowski space $\nfe_\RR$ 
    (when evaluated on a unit of the form $u$). 
Indeed, $H_\infty(u)$ is equal to 
\begin{align*} 
    \prod_{w \in \INFPRIMES{\nfe}}
        \left(\sum_{g \in G}
            \left|
            \varphi(g(x))
            \right|_w^2
        \right)^{\frac{d_\infty}{2[\nfe:\QQ]}}
    &=
    \prod_{j: \nfe \to \CC}
        \left(\sum_{g \in \gp}
            \left|
            j(\varphi(g(x))) 
            \right|_\infty^2
        \right)^{\frac{d_\infty}{2[\nfe:\QQ]}} \\
    &=\prod_{j: \nfe \to \CC}
        \left(
        \sum_{i=1}^{\dim_\nfe L}
        \sum_{g \in \Gal(\nfe/\QQ)}
            \left|
            j(g(x_i)) 
            \right|_\infty^2
        \right)^{\frac{d_\infty}{2[\nfe:\QQ]}} \\
    &=
    \prod_{j: \nfe \to \CC}
        \left(
        \sum_{i=1}^{\dim_\nfe L}
        \sum_{j': \nfe \to \CC}
            d_\infty
            \left|
            j'(x_i) 
            \right|_\infty^2
        \right)^{\frac{d_\infty}{2[\nfe:\QQ]}} \\
    &=
        \left(
        \sum_{i=1}^{\dim_\nfe L}
        \sum_{j: \nfe \to \CC}
            d_\infty
            \left|
            j(x_i) 
            \right|_\infty^2
        \right)^{\frac 12} \\
    &= \norm{x}_\infty.
\end{align*} 

For the finite components observe that $H_f(u)$ is equal to 
    \begin{equation} 
        \prod_{w \in \FINPRIMES{\nfe}}
        \max_{g \in \gp}
                \left|
                \varphi(g(x))
                \right|_w^{{d_w}/[\nfe:\QQ]}
        =
        \prod_{w \in \FINPRIMES{\nfe}}
        \max_{y \in J}
            |y|_w^{{d_w}/[\nfe:\QQ]}
        =N(J)^{-1/[\nfe:\QQ]} . 
        \tag*{$\qed$} 
    \end{equation} 
\renewcommand{\qedsymbol}{}
\vspace{-\baselineskip}
\end{proof} 

Let $\ord{}_{\infty} = \ds\lim_{D \to \infty} T_D$ 
    denote the stable multiplier order of $\nfe$ 
    attached to a rational point $(L,x)$ of $\X$. 

\begin{theorem*}[Theorem~\ref{thm:stablemultiplierorder}] 
    \,\,\,
    \begin{enumerate}
        \item $\Spec T_\infty \sqcup \cdots \sqcup \Spec T_\infty$ 
            for $\dim_\nfe L$ many copies 
            is isomorphic to the fiber of $\PP \to \PP/G$ 
            over the $\ZZ$-point $(L,x) \to \PP/G$. 
        \item $T_\infty = T_{D}$ if $D \geq |G|-1$. 
    \end{enumerate}
\end{theorem*} 


\begin{proof} 
Let $\Spec A$ denote the fiber over $\PPP \to \PPP/\gp$ 
    over $(L,x) \in \Xg(\QQ)$, 
    where we regard $(L,x)$ as a $\ZZ$-point of $\PPP/\gp$ 
    by the valuative criterion of properness. 
Let $q \colon \Spec A \to \PPP$ denote the natural projection map. 
We have identifications 
\begin{equation}
    H^0(\PPP,\mcO(D)) = 
    \bigoplus_{m \in Mon_D} \ZZ m
\end{equation}
and 
\begin{equation}
    H^0(\Spec A, q^\ast \mcO(D) ) 
    = \sum_{m \in Mon_D} A m(\{g(x)\}_g). 
\end{equation}
We claim the natural map 
\begin{equation}
    \phi \colon 
    H^0(\PPP,\mcO(D))  \to  
    H^0(\Spec A, q^\ast \mcO(D) ) 
\end{equation}
is surjective if $D \geq |G|-1$. 
Indeed its cokernel $C$ is finite, 
since its image and codomain are both lattices 
    (full rank abelian groups) of $L$. 
However $C$ also has trivial $p$-torsion for each prime $p$ 
    by \cite[Lemma~2.1]{MR2144974}. 
Thus $C = 0$. 
We see that 
\begin{equation}\label{eqn:imageofphi}
    \im \phi 
    = \sum_{m \in Mon_D} \ZZ m(\{g(x)\}_g) 
    = \sum_{m \in Mon_D} A m(\{g(x)\}_g). 
\end{equation}
This equality shows that $\im \phi$ 
    is closed under multiplication by $A$. 
Since $q^\ast \mcO(D)$ is a line bundle over $\Spec A$, 
    the $A$-module $\im \phi$ is projective. 

Now apply $\varphi$ to both sides of \eqref{eqn:imageofphi} 
where $\varphi \colon L \to \nfe$ is any $\QQ$-algebra homomorphism. 
The homomorphism $\im \phi \to \varphi(\im \phi) = \lat{}^D$ splits, 
    which implies that $\lat{}^D$ is a direct summand 
    of $\im \phi$ and is therefore projective 
    as an $A$-module. 
The action of $A$ on $\lat{}^D$ factors 
    through $\varphi(A)$, 
    which shows $\lat{}^D$ is a projective $\varphi(A)$-module. 
This implies that $\varphi(A) = (\lat{}^D:\lat{}^D)$ 
    by localization. 
As $\ord{}_\infty = \lim_{D \to \infty} (\lat{}^D:\lat{}^D)$ 
    we have $\varphi(A) = \ord{}_\infty$. 
Since $A \cong \varphi(A)^{\dim_\nfe L}$ the result follows. 
\end{proof} 

\begin{corollary} 
    For any integer $D \geq |G|-1$, 
    $N(I^D) = N(J^D)$. 
\end{corollary} 

\begin{proof} 
$I^D$ is the module of global sections 
    of the line bundle on $\Spec T_\infty$ 
    obtained by pullback of $\mcO(D)$ 
    along $\Spec T_\infty \to \PPP$ 
    and is therefore an invertible $T_\infty$-module. 
By localization we see that 
\begin{equation} 
    N(I^D) = [T_\infty : I^D] = 
    [O_K : O_KI^D] = [O_K : J^D] = N(J^D).
    \tag*{$\qed$} 
    \end{equation} 
\renewcommand{\qedsymbol}{}
\vspace{-\baselineskip}
\end{proof} 

\begin{remark}
An elementary argument shows that 
    $N(J)N(I)^{-1}$ 
    is bounded from above by the index of $T_1$ in $O_\nfe$ 
    and from below by $1$. 
\end{remark}


\section{Descent for metrics}  
\label{sec:descentmetrics} 

In this final section 
    we take up the problem of descending 
    metrized line bundles through 
    finite quotient maps. 
As is well-known, 
    the theory of metrized line bundles refines 
    the theory of heights 
    and is important for comparing  
    height functions not just up to an $O(1)$. 
With an eye towards future applications, 
    we take a step in this direction. 
The main result in this section is Theorem~\ref{thm:pushforward}, 
    which implies a refinement of Theorem~\ref{thm:thm1}. 
Let $\algvar$ be a projective $\gp$-variety over $\QQ$. 
Our result says that if $\lbtwo$ is any metrized $\gp$-line bundle 
    over $\algvar$ 
    for which $\gp$ acts by isometries, 
    then $\lbtwo^{\otimes |\gp|}$ descends to 
    a metrized line bundle $\lb$ on $\algvar/\gp$ such that 
    the associated height functions satisfy 
\begin{equation}
    h_\lb(x)  = |\gp|h_{\lbtwo}(y) 
\end{equation}
where $y$ is any preimage of 
$x \in (\algvar/\gp)(\algclosure{\QQ})$. 

Defining this metric on $\DLB$ as a set-theoretic function is easy: 
    the fiber $x^\ast \DLB$ can be identified 
    with the fiber of $\lbtwo^{\otimes |\gp|}$ 
    over any preimage of $x$ 
    and inherits the metric we already have there. 
This will be independent of the choice of preimage 
if $G$ acts by isometries on the metric on $\lbtwo$. 
The difficulty is showing that this function 
satisfies standard technical requirements on metrics, 
specifically the so-called adelic compatibility condition 
(see (3) in the definition below).

\begin{definition}\label{defn:adelicmetric} 
    A \defn{$v$-adic metric} on $\lbtwo$ is a collection 
    $\norm{\,\cdot\,}_v=
    (\norm{\,\cdot\,}_{v,y})_{y\in\algvar(\algclosure{\QQ_v})}$ 
    where $\norm{\,\cdot\,}_{v,y}$ is a norm 
    on $y^\ast \lbtwo$ satisfying 
    $\norm{cu}_{v,y} = |c|_v\norm{u}_{v,y}$ 
    for all $c \in \algclosure{\QQ_v}$ and $u \in y^\ast\lbtwo$. 
    A collection 
    $(\norm{\,\cdot\,}_v)_{v \in \PRIMES{\QQ}}$ 
    of $v$-adic metrics 
    is called an \defn{adelic metric} on $\lbtwo$ 
    if the following conditions are satisfied 
    for all $v \in \PRIMES{\QQ}$ and 
    $y \in \algvar(\algclosure{\QQ_v})$: 
\begin{enumerate}
    \item 
        $(y^\ast\lbtwo,\norm{\,\cdot\,}_{v,y}) \xrightarrow{\sigma} 
        ((\sigma y)^\ast\lbtwo,\norm{\,\cdot\,}_{v,\sigma y})$ 
        is an isometry 
        for every $\sigma \in \gal(\algclosure{\QQ_v}/\QQ_v)$, 
    \item $U(\algclosure{\QQ_v}) \to \RR \colon 
        y \mapsto \norm{s(y)}_{v,y}$ is continuous 
        for any open subset $U$ of $\algvar$ and $s \in {\lbtwo}(U)$, 
    \item $\lbtwo$ admits a generating set of global sections 
    $\{s_1,\ldots,s_m\}$, independent of $y$ and $v$,  
    with the property that 
    for all but finitely many $v \in \PRIMES{\QQ}$, 
\begin{equation} 
    \label{eqn:localheightfromgloballygeneratingset}
    \norm{s(y)}_{v,y} = 
    \left(\max_{1 \leq j \leq m}
            \left|
            \frac{s_j(y)}{s(y)} 
            \right|_{v}
    \right)^{-1}
\end{equation} 
    for any local section $s$ that is nonzero at $y$. 
\end{enumerate}
\end{definition}

When there is no need for disambiguation 
we write $\norm{\cdot}_v$ for $\norm{\cdot}_{v,y}$. 

Now let $\nfe/\QQ$ be a number field. 
The set of places $w$ of $\nfe$ over a fixed place $v$ of $\QQ$ 
is naturally in bijection with the set of field homomorphisms 
$j \colon \nfe \to \algclosure{\QQ_v}$ up to isometry. 
Given a point $y\in \algvar(\nfe)$ 
let $j(y) \in \algvar(\algclosure{\QQ_v})$ denote the point 
$\Spec \algclosure{\QQ_v}\to\Spec \nfe \xrightarrow{y} \algvar$. 
Let us write $\norm{\cdot}_{w,y}$ for the norm obtained on 
the fiber $y^\ast \lb$ (non-canonically isomorphic with $\nfe$) 
by pulling back $\norm{\cdot}_{v,j(y)}$ along the natural map 
$$y^\ast \lb \to 
y^\ast \lb \otimes_j \algclosure{\QQ_v}
= j(y)^\ast \lb.$$ 

The height of $y\in\algvar(\algclosure{\QQ})$ is defined by 
\begin{equation}
H_{\lb}(y) = 
\prod_{w \in \PRIMES{\nfi}}
    \norm{s(y)}_{w,y}^{-{d_w}/[\nfe:\QQ]}
\end{equation}
where $s\in\lb$ is any local section that is nonzero at $y$ 
and $\nfe/\QQ$ is any finite extension which $s$ and $y$ 
are defined over. 
(The $d_w$ in the exponent 
ensures this is independent of $s$ 
by the product formula 
and the $[\nfe:\QQ]$ ensures 
this is independent of $\nfe$.) 


The following lemma helps with finding global sections 
which satisfy \eqref{eqn:localheightfromgloballygeneratingset}. 

\begin{lemma} 
    \label{lemma:definingglobalsections}
    Let $\norm{\,\cdot\,}_v$ be a $v$-adic metric on $\lb$. 
    If $\{s_1,\ldots,s_m\}$ is a set of global sections of $\lb$ 
    satisfying 
    \begin{equation}
        \label{eqn:lemmamaxisone}
        \max_{1 \leq i \leq m} 
            \norm{s_i(x)}_{v} = 1 
    \end{equation}
    for all $x \in \algvar(\algclosure{\QQ_v})$, 
    then $\{s_1,\ldots,s_m\}$ globally generates $\lb$. 
    Furthermore, 
    for any local section $s$ that is nonzero at $x$, 
    \begin{equation}
        \norm{s(x)}_v = 
    \left(\max_{1 \leq i \leq m}
            \left|
            \frac{s_i(x)}{s(x)} 
            \right|_v
    \right)^{-1}.
    \end{equation}
\end{lemma} 

\begin{proof} 
It is clear that the $\{s_1,\ldots,s_m\}$ globally generate 
    $\lb$ from \eqref{eqn:lemmamaxisone}. 
    For any $i \in \{1,\ldots,m\}$ with $s_i(x) \neq 0$ 
    we have 
\begin{equation}
\norm{s(x)}_v = 
    \left|
    \frac{s(x)}{s_i(x)} 
    \right|_v
    \norm{s_i(x)}_v
    \leq
    \left|
    \frac{s(x)}{s_i(x)} 
    \right|_v.
\end{equation}
    By \eqref{eqn:lemmamaxisone}, 
    equality is obtained for some $i$ and therefore 
    \begin{equation}
        \norm{s(x)}_v = 
    \min_{i:s_i(x)\neq 0}
    \left|
    \frac{s(x)}{s_i(x)} 
    \right|_v
    =
    \left(\max_{1 \leq i \leq m}
    \left|
    \frac{s_i(x)}{s(x)} 
    \right|_v
    \right)^{-1}. 
    \tag*{$\qed$} 
\end{equation}
\renewcommand{\qedsymbol}{}
\vspace{-\baselineskip}
\end{proof} 

Let $\lbtwo$ be a metrized $\gp$-line bundle over $\algvar$. 

\begin{definition}
We say that \defn{$\gp$ acts by isometries} 
if for all $v \in \PRIMES{\QQ}$, 
$g \in \gp$, and $x \in \algvar(\algclosure{\QQ_v})$, 
the linear map on fibers 
\begin{align} 
    (x^\ast \lbtwo,\norm{\,\cdot\,}_{w,x}) 
    \xrightarrow{g} ((xg)^\ast\lbtwo,\norm{\,\cdot\,}_{w,xg})
\end{align} 
is an isometry. 
\end{definition} 

Let $\pi \colon \algvar \to \algvar/\gp$ 
    denote the quotient map. 
By Proposition~\ref{prop:imageofpi}, there is 
    a unique line bundle $\lb$ on $\algvar/\gp$ such that 
    $$\pi^\ast \lb \cong \lbtwo^{\otimes |\gp|}$$ 
    as $\gp$-line bundles. 
Since $\lbtwo^{\otimes |\gp|}$ carries a natural metric 
    induced from $\lbtwo$, we can ask whether 
    this metric descends to $\lb$. 

\begin{theorem}\label{thm:pushforward} 
Suppose $G$ acts by isometries on $\lbtwo$. 
Then there is a unique adelic metric on $\lb$ 
    which pulls back to the induced metric on $\lbtwo^{\otimes |\gp|}$. 
\end{theorem} 

Thus if $x \in (\algvar/\gp)(\algclosure{\QQ})$ and 
$y \in \algvar(\algclosure{\QQ})$ is any preimage, 
then 
    $H_{\lb}(x) 
    =
        H_{\lbtwo}(y)^{|\gp|}$. 

\begin{remark} 
The reason for descending the metric on 
    the $|G|$-fold tensor product rather than 
    the original metric 
    is that the global sections in 
    \eqref{eqn:localheightfromgloballygeneratingset} 
    have no reason to be $\gp$-invariant. 
Our remedy is to symmetrize the global sections 
by taking the product \eqref{eqn:sectionsymmetrized}
    over their $\gp$-orbits 
    which necessitates replacing $\lbtwo$ with $\lbtwo^{\otimes |\gp|}$. 
This suffices for our application 
    to $\lbtwo = \mcO(1)$ and the line bundle $\lb$ 
    constructed in \S\ref{sec:three}. 
\end{remark} 

\begin{proof} 
Let $x \in (\algvar/\gp)(\algclosure{\QQ_v})$. 
Let $s$ be any local nonvanishing section at $x$ and suppose
    $y \in \algvar(\algclosure{\QQ_v})$ maps to $x$. 
In order for the metric on $\DLB$ to pull back to 
    the metric on $\lbtwo^{\otimes |\gp|}$ 
    it is clear that we must define 
    the norm on $x^\ast \lb$ to be 
\begin{equation} 
    \norm{s(x)}_{v,x} \coloneqq \norm{(\pi^\ast s)(y)}_{v,y}, 
\end{equation} 
and so uniqueness is clear. 
This is independent of the choice of $y$ 
    since $\gp$ acts by isometries. 
It is straightforward to show that the collection 
    of $v$-adic metrics this defines 
    satisfies the first two conditions 
    (Galois invariance and continuity) 
    of an adelic metric. 

For the third condition 
    we will construct a generating set of global sections 
    $\{s_1,\ldots,s_m\}$ of $\lb$ such that 
\begin{equation} 
    \label{eqn:propclaim}
    \norm{(\pi^\ast s)(y)}_{v,y}
    = 
    \left(\max_{1 \leq i \leq m}
            \left|
            \frac{s_i(x)}{s(x)} 
            \right|_v
    \right)^{-1} 
\end{equation} 
    Let $\{t_1,\ldots,t_n\} \subset H^0(\algvar,\lbtwo)$ 
    be a generating set of global sections that defines 
    by \eqref{eqn:localheightfromgloballygeneratingset} 
    the $v$-adic components of the adelic metric on $\lbtwo$ 
    for all places $v \not \in S$, where 
    $S \subset \PRIMES{\QQ}$ is any finite set 
    containing the archimedean place. 
Fix a place $v\not \in S$ 
    and a point $y \in \algvar(\algclosure{\QQ_v})$. 
Then  
    \begin{equation}
        \label{eqn:normisone}
        \max_{1 \leq i \leq n} \norm{t_i(y)}_{v}
        =\max_{i: t_i(y) \neq 0}
        \min_{j: t_j(y) \neq 0}
        \left|
        \tfrac{t_j(y)}{t_i(y)}
        \right|_v
        =1.
    \end{equation}
Consider the set 
\begin{equation} 
    \label{eqn:defnA}
    A_y:= \{(i,g) \in [n] \times \gp : \norm{({^\el t_i})(y)}_{v} = 1 \}.
\end{equation} 
Since $\gp$ acts by isometries, 
\begin{equation} 
    \norm{({^g t_i})(y)}_{v} = 
    \norm{g(t_i(g^{-1} y))}_{v,y} =
    \norm{t_i(g^{-1} y)}_{v,g^{-1} y} 
\end{equation} 
    so from \eqref{eqn:normisone}, 
\begin{equation} 
    \label{eqn:maxisone}
    \max_{i,g} \norm{({^g t_i})(y)}_{v,y} = 
    \max_{i,g}  \norm{t_i(g^{-1} y)}_{v,g^{-1} y} = 1.
\end{equation} 
This shows $A_y$ is not empty so let $i$ be any fixed element of $[n]$ 
    with $(i,g)\in A_y$ for some $g \in \gp$. 
    Now define the sets 
\begin{equation} 
    B_y:= \{g : (i,g) \in A_y \}, \,\,\, H_y := \{h : B_yh = B_y \}. 
\end{equation} 
For each orbit $gH_y \in B_y/H_y$ choose a positive integer $a_{gH_y}$ 
    subject to the condition that 
\begin{equation} 
    \sum_{gH_y \in B_y/H_y} a_{gH_y} = [\gp:H_y] .
\end{equation} 
This is always possible since $H_y$ acts freely on $B_y$. 
It is easily verified that 
\begin{equation} 
    \prod_{h \in B_y}
        ({^{h}t_i})^{a_{h^{-1}H_y}} 
\end{equation} 
    is an $H_y$-invariant global section of $\lbtwo^{\otimes|\gp|}$. 
We define the global section 
\begin{equation} 
    \label{eqn:sectionsymmetrized}
    s'_y:=
    \sum_{gH_y \in \gp/H_y}
    \prod_{h \in B_y}
        ({^{gh}t_i})^{a_{h^{-1}H_y}}
\end{equation} 
which is manifestly $\gp$-invariant. 
Since 
    $H^0(\algvar,\lbtwo^{\otimes |\gp|})^\gp \cong H^0(\algvar/\gp,\lb)$ 
    (by pulling back along $\pi$) 
    the section $s'_y$ determines 
    a unique global section $s_y''$ of $\lb$ such that 
    $\pi^\ast s_y'' = s_y'$. 
    {\textit{A priori}} the section $s_y''$ depends on 
    $y$, $i$, $v$, and 
    the integers $(a_{gH_y})_{gH_y \in B_y/H_y}$, 
    but as $y$ varies over $\algvar(\algclosure{\QQ_v})$
    for all $v\not \in S$ 
    there are only finitely many possibilities for 
    $A_y$, $i$, and the $(a_{gH_y})_{gH_y \in B_y/H_y}$,  
    and these suffice to determine $s_y''$. 
    Let $\{s_1,\ldots,s_m\}$ denote 
    the set of global sections of $\lb$ arising in this way. 

Let $s_i' = \pi^\ast s_i$ for $1 \leq i \leq m$. 
First note that 
    $\frac{s_i(x)}{s(x)} 
    =
    \frac{s_i'(y)}{(\pi^\ast s)(y)}$ and thus 
\begin{equation} 
    \max_{1 \leq i \leq m}
            \left|
            \frac{s_i(x)}{s(x)} 
            \right|_v
            =
    \max_{1 \leq i \leq m}
            \left|
            \frac{s_i'(y)}{(\pi^\ast s)(y)} 
            \right|_v . 
\end{equation} 
Thus by Lemma~\ref{lemma:definingglobalsections}, 
if 
\begin{equation}
    \label{eqn:desiredmaximum}
    \max_{1 \leq i \leq m} 
        \norm{s_i'(y)}_v = 1 
\end{equation}
    for all $y \in \algvar(\algclosure{\QQ_v})$ 
    then \eqref{eqn:propclaim} holds. 
By construction, for each $s' \in \{s_1',\ldots,s_m' \}$ 
there is a global section $t \in \{t_1,\ldots,t_n\}$, 
a subset $B \subset \gp$, and a subgroup $H \leq \gp$ 
acting freely on $B$, and 
integers $a_{gH}$ for each coset 
$gH \in B/H$ such that 
\begin{equation}
    \label{eqn:expressionfors}
    s' =
    \sum_{gH \in \gp/H}
    \prod_{h \in B}
        ({^{gh}t})^{a_{h^{-1}H}}. 
\end{equation} 
Since the set $A_y$ defined by \eqref{eqn:defnA} is nonempty, 
there are $g \in B_y$ and $1 \leq i \leq n$ such that 
$\norm{({^g t_i})(y)}_{v} = 1$. 
By \eqref{eqn:maxisone}, $\norm{({^g t_i})(y)}_{v}<1$ for any $g \not \in B_y$,  
and therefore a single monomial in the sum 
\eqref{eqn:expressionfors} realizes the maximum. 
By the ultrametric inequality we have 
\begin{equation} 
    \max_{1 \leq i \leq m} \norm{s_i'(y)}_v
    =
    \prod_{h \in B_y}\big|\!\big|{({^{h}t_i})(y)}\big|\!\big|_v^{a_{h^{-1}H_y}} = 1 . 
    \tag*{$\qed$} 
\end{equation} 
\renewcommand{\qedsymbol}{}
\vspace{-\baselineskip}
\end{proof} 


Let $\lb$ denote the line bundle on $\PPP/\gp$ 
constructed in \S\ref{sec:three}. 

\begin{corollary}\label{cor:mainthmrefinement}
There is a unique adelic metric on $\DLB$ 
    which pulls back to the standard metric on 
    $\mathcal O(1)^{\otimes |\gp|}$. 
The height $h_{AC}$ induced by this metric satisfies 
    $$h_{AC}(L,x) = 
    |G|\log{\norm{x}_\infty} - 
    \frac{|G|}{[\nfe:\QQ]}\log{N(J)}$$ 
    on any rational point $(L,x) \in \X(\QQ)$. 
\end{corollary}

\begin{proof}
The proof of Theorem~\ref{thm:thm1} shows that 
    if $u \in \UG{\gp}$ is any preimage of $(L,x)$, then 
    $$h(u) = 
    \log{\norm{x}_\infty} - 
    \frac{1}{[\nfe:\QQ]}\log{N(J)}$$ 
    where $h$ is the height on $\PPP$ associated to 
    the standard metric on $\mathcal O(1)$. 
By the last theorem 
    there is a unique adelic metric on $\DLB$ 
    which pulls back to the metric on 
    $\mathcal O(1)^{\otimes |\gp|}$ 
    and the associated height $h_{AC}$ will satisfy 
    $h_{AC} (L,x) = |\gp|h(u)$. 
\end{proof}

In the next two examples we take $H_{AC} = e^{h_{AC}}$. 

\begin{example}[$\ord{x}$ Gorenstein, $L = \nfe$] 
If $\ord{x} = (\lat{x}:\lat{x})$ is Gorenstein, 
then $\lat{x}$ is $T$-projective 
(cf.~e.g.~\cite[4.2]{jensen-thorup}). 
(For instance, if $\ord{x}$ is monogenic then it is Gorenstein. 
    More generally, $T$ is Gorenstein if and only if 
    its different is invertible.) 
    Then $N(\lat{x}) = [\ord{x}:\lat{x}] 
    = \sqrt{\disc{\lat{x}}\disc{\ord{x}}^{-1}}$ 
    and 
    $$H_{AC}(L,x) = 
    \norm{x}_\infty^{|\gp|} 
    \sqrt{\disc{\ord{}}\disc{\lat{x}}^{-1}}.$$ 
If additionally $L$ is totally real and $x$ is self-dual, 
then 
\begin{equation}
    H_{AC}(L,x) = \sqrt{\disc{\ord{}}}. 
\end{equation}
\end{example} 

\begin{example}[$L = \QQ \times \cdots \times \QQ$] 
    $T = \ZZ$ and $I = \ell \ZZ$ 
    for some $\ell \in \QQ^{>0}$. Then 
    \begin{equation*} 
        H_{AC}(\QQ \times \cdots \times \QQ,x) =
        \left(\frac{\sqrt{x_1^2+\cdots+x_{|G|}^2}}{\ell}
        \right)^{|\gp|}.
    \end{equation*}
\end{example} 


\section*{Acknowledgements} 
The authors thank the following people for 
helpful discussions: 
Hyman Bass, Jordan Ellenberg, 
Wei Ho, Emanuel Reinecke, and Kannappan Sampath.

\bibliography{draft}
\bibliographystyle{abbrv}

\end{document}